 \newtheorem{Thm}{Theorem}[section]
 \newtheorem{Lem}[Thm]{Lemma}
\theoremstyle{remark}
\theoremstyle{definition}
\newtheorem{Def}[Thm]{Definition}
\newtheorem{Expl}[Thm]{Example}
\newcommand\ol{\overline}
\newcommand\inv{^{-1}}
\newcommand{\End}{\operatorname{End}}
\newcommand{\ot}{\otimes}
\newcommand{\ou}[1]{\underset{{#1}}{\otimes}}
\newcommand\K{\mathbbm k}
\newcommand\Der{\operatorname{Der}}
\newcommand\chr{\operatorname{char}}
\newcommand\UU{{\mathfrak U}}
\newcommand\uu{{\mathfrak u}}
\newcommand\MS{\mathcal S}
\newcommand\ad{\operatorname{ad}}
\begin{document}
\title{A note on the restricted universal enveloping algebra of a restricted Lie-Rinehart Algebra}
\author{Peter Schauenburg\\
  Institut de Math{\'e}matiques de Bourgogne \\--- UMR 5584 du CNRS\\
Universit{\'e} de Bourgogne\\
BP 47870, 21078 Dijon Cedex\\
France\\
\texttt{peter.schauenburg@u-bourgogne.fr}}
\maketitle
\begin{abstract}
  Lie-Rinehart algebras, also known as Lie algebroids, give rise to Hopf algebroids by a universal enveloping algebra construction, much as the universal enveloping algebra of an ordinary Lie algebra gives a Hopf algebra, of infinite dimension. In finite characteristic, the universal enveloping algebra of a restricted Lie algebra admits a quotient Hopf algebra which is finite-dimensional if the Lie algebra is. Rumynin has shown that suitably defined restricted Lie algebroids allow to define restricted universal enveloping algebras that are finitely generated projective if the Lie algebroid is. This note presents an alternative proof and possibly fills a gap that might, however, only be a gap in the author's understanding.
\end{abstract}
\maketitle
\section{Introduction}
Throughout the paper, we fix a commutative base ring $\K$. All algebras, associative or Lie, and all unadorned tensor products or sets of homomorphisms are to be understood over $\K$. For the most part, we will assume that $\chr(\K)=p$ is prime.

The concept of a Lie algebroid or Lie-Rinehart algebra was introduced by several authors to capture the algebraic structure of a Lie algebra of vector fields. This is not only a Lie algebra but also a module over the algebra of functions, but not a Lie algebra over that algebra. Rather than being bilinear with respect to the algebra of functions, the bracket fulfills a relation involving the action of the vector fields as derivations on the functions. The axiomatics of a Lie-Rinehart algebra thus involve a commutative algebra $R$ and a Lie algebra $L$ which is a left $R$-module and acts on $R$ by derivations. Predictably a restricted Lie-Rinehart algebra is a restricted Lie algebra that is also an $R$-module and acts on $R$ by derivations such that the $p$-th power operation in $L$ corresponds to the $p$-th power of a derivation. The additional compatibility condition that has to be imposed between the $R$-module structure and the $p$-th power operation appears in work of Hochschild. Rinehart has defined the universal enveloping algebra of a Lie-Rinehart algebra, and proved a Poincaré-Birkhoff-Witt theorem for these envelopes. Rumynin has defined the restricted universal enveloping algebra of a restricted Lie-Rinehart algebra $L$ in the obvious way, and proved the corresponding Poincaré-Birkhoff-Witt theorem in the case that $L$ is projective: In a localization at a prime ideal, the restricted universal enveloping algebra is a free module with a PBW basis truncated at $p$-th powers. In particular, if $L$ is finitely generated projective as left $R$-module, then so is its universal enveloping algebra.

The present note provides an alternative proof of the PBW theorem for restricted Lie-Rinehart algebras. We use Rinehart's PBW theorem and adapt the technique used in Jacobson's textbook on Lie algebras to give a ``better'' basis of the universal envelope of a restricted Lie algebra. The main advantage of our proof is perhaps that it fits comfortably in this rather short note, wheras Rumynin opts to leave a ``long and instructive exercise'' to his readers. We also spend some time to show that localizing a restricted Lie-Rinehart algebra at a prime ideal of $R$ (or with respect to a multiplicative set for that matter) does yield a restricted Lie-Rinehart algebra; this seems an essential ingredient when treating the case when $L$ is a projective rather than a free $R$-module.

\section{Restricted Lie-Rinehart Algebras}
\label{sec:restr-lie-rineh}

Let $R$ be a commutative $\K $-algebra. Recall \cite{MR0055323,MR0125867,MR0154906} that a Lie-Rinehart algebra is a $\K$-Lie algebra $L$ which is also an $R$-module, endowed with an $R$-linear $\K$-Lie algebra homomorphism $\epsilon\colon L\to\Der_{\K}(R)$ called the anchor, such that $[x,ry]=r[x,y]+\epsilon(x)(r)y$ for $x,y\in L$ and $r\in R$. We will often suppress $\epsilon$ in the sequel. Recall also that a representation of a Lie-Rinehart algebra $L$ is an $R$-module $M$ with a left $R$-linear Lie algebra map $\rho\colon L\to\End_{\K}(M)$ such that $\rho(x)(rm)=r\rho(x)(m)+\epsilon(x)(r)m$.

\begin{Lem}\label{thm:1}
  Let $M$ be a representation of the $R$-Lie-Rinehart algebra $L$. Assume that $\chr(\K)=p$ is prime. Then $(rx)^p=r^px^p+(rx)^{p-1}(r)x$ in $\End_{\K}(M)$ for $r\in R$ and $x\in L$.
\end{Lem}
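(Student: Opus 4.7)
My plan is to reduce the assertion to a purely associative-algebra identity and then establish it by induction. The representation axiom $\rho(x)(rm)=r\rho(x)(m)+\epsilon(x)(r)m$ is equivalent to the commutation relation $\rho(x)\cdot r-r\cdot\rho(x)=\epsilon(x)(r)$ in $\End_{\K}(M)$, once $r\in R$ and $\epsilon(x)(r)\in R$ are read as multiplication operators on $M$. Writing $d=\rho(x)$ and $D=\epsilon(x)$ for brevity, the claim becomes
\[(rd)^p=r^p d^p+(rD)^{p-1}(r)\cdot d\]
in any associative $\K$-algebra in which $dr-rd=D(r)$ for a derivation $D$ on the commutative subring $R$.

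To prove this I expand $(rd)^n=\sum_{k\ge 0}f_{n,k}d^k$ with $f_{n,k}\in R$. Using $d\cdot s=s d+D(s)$ to commute the leftmost $d$ past the $R$-coefficients gives the recursion
\[f_{n+1,k}=r\bigl(f_{n,k-1}+D(f_{n,k})\bigr),\]
with initial data $f_{0,0}=1$ and $f_{0,k}=0$ otherwise. A direct induction then produces the extremal values $f_{n,0}=0$ for $n\ge 1$, $f_{n,n}=r^n$, and $f_{n,1}=(rD)^{n-1}(r)$, which are precisely the leading and trailing terms expected when $n=p$.

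The essential step, and the one I expect to be the main obstacle, is to show that the intermediate coefficients $f_{p,k}$ for $2\le k\le p-1$ vanish in $R$; this is where the hypothesis $\chr(\K)=p$ must intervene. I would handle this by unpacking the recursion modulo $p$: each $f_{p,k}$ unfolds into a sum of $R$-monomials in $r$ and its iterated $D$-images, and a bookkeeping argument should let one group the scalar coefficients into binomial factors of the form $\binom{p}{j}$ with $0<j<p$, all of which vanish mod $p$. A cleaner but less self-contained alternative is to observe that the identity is equivalent to Hochschild's classical formula for the $p$-th power of a scalar-twisted derivation, proved once and for all in the universal skew polynomial ring $R[X;D]$ and then transported into $\End_{\K}(M)$ via the unique algebra map respecting the commutation relation. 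Either route rests on the same characteristic-$p$ combinatorial input.
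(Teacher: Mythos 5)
Your reduction of the Lemma to the operator identity $(rd)^p=r^pd^p+(rD)^{p-1}(r)\,d$ in an associative ring where $dr-rd=D(r)$ for a derivation $D$ of the commutative subring $R$ is correct, and your ``less self-contained alternative'' is in fact exactly what the paper does: the paper gives no proof at all, but simply observes that the Lemma is a rephrasing of Hochschild's classical formula for the $p$-th power of $aD$ (Lemma~1 of the cited Hochschild paper), proved once in the universal situation and transported to $\End_{\K}(M)$. Your recursion $f_{n+1,k}=r\bigl(f_{n,k-1}+D(f_{n,k})\bigr)$ and the extremal values $f_{n,0}=0$, $f_{n,n}=r^n$, $f_{n,1}=(rD)^{n-1}(r)$ are all correct.

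The gap is exactly where you anticipate it, and your proposed repair does not work as stated. The claim that the intermediate coefficients ``group into binomial factors $\binom{p}{j}$'' is contradicted by direct computation: for $p=5$ the recursion gives, over $\mathbb{Z}$,
\begin{equation*}
  f_{5,4}=10\,r^4D(r),\quad
  f_{5,3}=25\,r^3D(r)^2+10\,r^4D^2(r),\quad
  f_{5,2}=15\,r^2D(r)^3+30\,r^3D(r)D^2(r)+5\,r^4D^3(r),
\end{equation*}
so all coefficients are indeed divisible by $5$, but $25$, $15$ and $30$ are not binomial coefficients $\binom{5}{j}$; the bookkeeping you describe would require a regrouping you have not exhibited, and it is not clear one exists. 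The standard (Hochschild's) way to close the argument is different. Pass to the universal case $R_0=\mathbb{F}_p[a_0,a_1,\dots]$ with $D(a_i)=a_{i+1}$ and $a=a_0$, where $1,D,\dots,D^p$ are $R_0$-linearly independent as operators on $R_0$, so the identity in $R_0[X;D]$ is detected by the induced operator identity on $R_0$. There, $(aD)^p$ is again a derivation of $R_0$, because the $p$-th power of a derivation is a derivation in characteristic $p$ --- this is the one place the hypothesis $\chr(\K)=p$ is used. On the other hand, if $\sum_{k\geq 1}c_kD^k$ is a derivation, the Leibniz rule forces $c_k\binom{k}{j}=0$ for $1\leq j\leq k-1$; taking $j=1$ gives $kc_k=0$, hence $c_k=0$ for $1<k<p$ since $k$ is then invertible. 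Combined with your extremal coefficients this proves the identity universally, and specializing $a_j\mapsto D^j(r)$ gives the general case. Without this step (or the citation of Hochschild, which is the paper's entire proof), your primary argument is incomplete.
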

The lemma is a rephrasing of \cite[Lemma 1]{MR0070961}, and gives rise to the following definition, which is at least implicit in \cite{MR0070961} and explicit in \cite{MR1738261}:
\begin{Def}
  Assume $ \chr(\K)=p$ is prime. A restricted Lie-Rinehart algebra is a Lie-Rinehart algebra $L$ which is a restricted Lie algebra such that
  \begin{equation}
    \label{eq:1}
    (rx)^{[p]}=r^px^{[p]}+(rx)^{p-1}(r)x
  \end{equation}
for $x\in L$ and $r\in R$.

  A restricted representation of $L$ is a representation of $L$ both in the sense of Lie-Rinehart algebras and of restricted Lie algebras.
\end{Def}

\begin{Expl}
  If $ \chr(\K)=p$ is prime and $R$ a $\K$-algebra, then $\Der_{\K}(R)$ is a restricted Lie-Rinehart algebra.
\end{Expl}
\begin{Expl}
  Let $H$ be a $\times_R$-bialgebra (as defined in \cite{Tak:GAAA}) over a commutative $\K$-algebra $R$, such that $r=\ol r\in H$ for $r\in R$. The $R$-module $P(H)=\{X\in H|\Delta(X)=X\ot 1+1\ot X\}$ of primitive elements of $H$ is a Lie-Rinehart algebra. If $ \chr(\K)=p$ is prime, then $P(H)$ is a restricted Lie-Rinehart algebra with respect to the $p$-th power operation.
\end{Expl}
\begin{Expl}\label{ex:1}
  Let $A$ be a $\K$-algebra, and $R\subset A$ a subalgebra such that $ar-ra\in R$ for all $a\in A$ and $r\in R$. Then $A$ is an $R$-Lie-Rinehart algebra with respect to the commutator. If $\chr(\K)=p$ is a prime, then $A$ is restricted with respect to the $p$-th power operation.
\end{Expl}

The following observation is elementary but will be useful later:
\begin{Lem}\label{thm:4}
  Let $L,L'$ be two restricted Lie-Rinehart algebras over $R$ in characteristic $p$. Let $f\colon L\to L'$ be a homomorphism of Lie-Rinehart algebras. Assume that $f(x^{[p]})=f(x)^{[p]}$ for all $x$ in a generating set of the left $R$-module $L$. Then $f$ is restricted.
\end{Lem}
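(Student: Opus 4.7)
The plan is to show that the set $T=\{x\in L \mid f(x^{[p]})=f(x)^{[p]}\}$ is both closed under scalar multiplication by $R$ and closed under addition. Since by hypothesis $T$ contains an $R$-generating set of $L$, this will force $T=L$.

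For closure under the $R$-action, let $x\in T$ and $r\in R$. Applying $f$ to the identity \eqref{eq:1} and using $R$-linearity of $f$ gives
\begin{equation*}
  f((rx)^{[p]}) = r^p f(x^{[p]}) + (rx)^{p-1}(r)\, f(x) = r^p f(x)^{[p]} + (rx)^{p-1}(r)\, f(x).
\end{equation*}
Since $f$ is a map of Lie-Rinehart algebras, the anchors satisfy $\epsilon'\circ f=\epsilon$, so the element $(rx)^{p-1}(r)\in R$ --- obtained by iterating the derivation $\epsilon(rx)$ on $r$ --- coincides with $(rf(x))^{p-1}(r)$ computed via $\epsilon'$. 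Applying \eqref{eq:1} in $L'$ to $rf(x)=f(rx)$ then shows the right-hand side equals $f(rx)^{[p]}$, so $rx\in T$.

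For closure under addition, I would invoke Jacobson's formula $(x+y)^{[p]}=x^{[p]}+y^{[p]}+\sum_{i=1}^{p-1}s_i(x,y)$, where each $s_i(x,y)$ is a $\K$-linear combination of iterated Lie brackets in $x$ and $y$. Because $f$ is a Lie algebra homomorphism, $f(s_i(x,y))=s_i(f(x),f(y))$, and applying $f$ to Jacobson's identity together with the same identity in $L'$ yields $f((x+y)^{[p]})=f(x+y)^{[p]}$ as soon as $x,y\in T$. Combining the two closure properties, $T$ contains every $R$-linear combination of elements of the given generating set, hence $T=L$. The argument is essentially formal; the only point requiring a moment of thought is that the correction term in \eqref{eq:1} involves only the anchor action on $R$, which is transported automatically by any Lie-Rinehart homomorphism, independent of the restriction structure.
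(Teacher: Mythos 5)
Your proof is correct and follows essentially the same route as the paper: the paper's own argument consists of exactly your scalar-multiplication computation via \eqref{eq:1} (with the same silent identification of $(rx)^{p-1}(r)$ and $(rf(x))^{p-1}(r)$ through the anchor), and then delegates the additive closure to the ``well known'' fact about restricted Lie algebra maps on $\K$-generating sets, which is your Jacobson-formula step made explicit.
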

\begin{proof}
  If $f(x^{[p]})=f(x)^{[p]}$, then $f((rx)^{[p]})=f(r^px^{[p]}+(rx)^{p-1}(r)x)=r^pf(x^{[p]})+(rf(x))^{p-1}(r)f(x)=r^pf(x)^{[p]}+(rf(x))^{p-1}(r)f(x)=(rf(x))^{[p]}=f(rx)^{[p]}$. That a Lie algebra homomorphism $f$ between restricted Lie algebras is restricted if $f(x^{[p]})=f(x)^{[p]}$ for all $x$ in a generating set of the $\K$-module $L$ should be well known.
\end{proof}

\section{The restricted universal enveloping algebra}
\label{sec:restr-envel-algebra}
Recall from \cite{MR0154906} that the universal enveloping algebra $\UU(R,L)$ of a Lie-Rinehart algebra $(R,L)$ is a $\K$-algebra endowed with an algebra map $\iota_R\colon R\to \UU(R,L)$ and a Lie algebra map $\iota_L\colon L\to\UU(R,L)$ such that $\iota_R(r)\iota_L(x)=\iota_L(rx)$ and $\iota_L(x)\iota_R(r)-\iota_R(r)\iota_L(x)=\iota_R(\epsilon(x)(r))$ for $x\in L$ and $r\in R$, and universal among triples $(A,i_R,i_L)$ with analogous properties.

\begin{Def}[\cite{MR1738261}]\label{def:RUEA}
  Let $(R,L)$ be a restricted Lie-Rinehart algebra in characteristic $p$. The restricted universal enveloping algebra is a universal triple $(\uu(R,L),\iota_R,\iota_L)$ with an associative algebra $\uu(R,L)$, an algebra map $\iota_R\colon R\to\uu(R,L)$ and a restricted Lie algebra map $\iota_L\colon L\to\uu(R,L)$ such that $\iota_R(r)\iota_L(x)=\iota_L(rx)$ and $\iota_L(x)\iota_R(r)-\iota_R(r)\iota_L(x)=\iota_R(\epsilon(x)(r))$ for $x\in L$ and $r\in R$.
\end{Def}

We have already noted above that taking primitive elements takes us from certain $\times_R$-bialgebras to (restricted) Lie algebroids. In the other direction, the (restricted) universal envelope of a (restricted) Lie-Rinehart algebra is a $\times_R$-bialgebra. The relation is treated in detail in \cite{MR2653938}, but was already known in \cite{MR1738261} (in the restricted case); finding the earliest reference is perhaps rendered harder by the fact that numerous versions of the axiomatics of ``hopf algebroids'', ``quantum groupoids'', ``bialgebroids'' and the like are around. In a special case, the idea of a comultiplication on the envelope of a Lie algebroid can be tracked back to \cite{MR812990}. In each case, while the bialgebra-like properties are fruitful and interesting, they are rather immediately proved from the universal property of the universal enveloping algebras.

The existence of both the universal enveloping algebra and the restricted universal enveloping algebra is to be expected by the general principles of universal algebra. More interestingly, Rinehart proved a version of the Poincaré-Birkhoff-Witt theorem for universal enveloping algebras of Lie-Rinehart algebras. Rumynin \cite{MR1738261} has provided the appropriate version for the restricted case:
\begin{Thm}
  Let $(R,L)$ be a restricted Lie-Rinehart algebra in characteristic $p$.
  \begin{enumerate}
  \item If $L$ is a free $R$-module with ordered basis $(x_i)_{i\in I}$, then $\uu(R,L)$ is a free left $R$-module with basis $\{\iota_L(x_{i_1})^{e_1}\cdot\dots\cdot\iota_L(x_{i_\ell})^{e_\ell}|\ell\geq 0,i_1<\dots<i_\ell,1\leq e_i<p\}$.
  \item If $L$ is a projective $R$-module, then $\uu(R,L)$ is a locally free left $R$-module.
  \item If $L$ is a finitely generated projective $R$-module, then $\uu(R,L)$ is finitely generated projective as left $R$-module.
  \end{enumerate}
\end{Thm}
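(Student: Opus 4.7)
The approach deduces the restricted PBW theorem from Rinehart's unrestricted version by a Jacobson-style central quotient argument in the free case, then passes to the general case by localization.

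For part (1), a priori $\uu(R,L) = \UU(R,L)/J$ where $J$ is generated by all $\iota_L(x)^p - \iota_L(x^{[p]})$. I would first reduce to the smaller ideal $J_0$ generated only by the basis elements $z_i := \iota_L(x_i)^p - \iota_L(x_i^{[p]})$. Applying Lemma~\ref{thm:1} to the representation of $L$ on $\UU(R,L)$ by left multiplication yields the identity $\iota_L(rx)^p = \iota_R(r^p)\iota_L(x)^p + \iota_R((rx)^{p-1}(r))\iota_L(x)$ in $\UU(R,L)$; reducing modulo $J_0$ gives $\iota_L(rx_i)^p = \iota_L((rx_i)^{[p]})$ for every $r\in R$ and $i\in I$, and the Lie-algebra-only statement at the end of the proof of Lemma~\ref{thm:4} extends this to all of $L$, so $J = J_0$. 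Next I would show that each $z_i$ is central in $\UU(R,L)$ using Jacobson's identity $\ad(a)^p = \ad(a^p)$ in any characteristic-$p$ associative algebra: the operator $\ad(\iota_L(x_i)^p)$ agrees with $\ad(\iota_L(x_i^{[p]}))$ on $\iota_L(L)$ (because $L$ is restricted) and on $\iota_R(R)$ (because the anchor is a restricted representation of $L$ on $R$). Hence $J_0 = \sum_i \UU(R,L)\cdot z_i$.

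The main obstacle is the Jacobson-style structural result: letting $B'$ denote the free $R$-submodule of $\UU(R,L)$ spanned by the restricted PBW monomials, multiplication induces an $R$-module isomorphism $B' \otimes_R R[z_i : i \in I] \xrightarrow{\sim} \UU(R,L)$. Surjectivity is an induction on the total PBW degree: writing $e_i = pq_i + r_i$ with $0 \leq r_i < p$ and using centrality of the $z_i$ one has $\iota_L(x_i)^{e_i} = \iota_L(x_i)^{r_i}(z_i + \iota_L(x_i^{[p]}))^{q_i}$, whose $z$-free terms strictly decrease the PBW degree. Injectivity is a leading-term argument under the same filtration: $z_i$ has leading term $\iota_L(x_i)^p$, so $b' z^{\alpha}$ has leading Rinehart-PBW monomial with exponents $(r_j + p\alpha_j)_j$, and the map $(b',\alpha) \mapsto$ this monomial is a bijection onto all Rinehart PBW monomials. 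Granting the isomorphism, $\uu(R,L) = \UU(R,L)/J_0 = \UU(R,L)\otimes_{R[z]}R \cong B'$, giving (1).

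For (2), I would show that $\uu$ commutes with localization, i.e.\ $R_{\mathfrak p}\otimes_R \uu(R,L) \cong \uu(R_{\mathfrak p}, L_{\mathfrak p})$, via the universal property together with the companion result (promised in the introduction) that localizing a restricted Lie-Rinehart algebra produces another one. Since projective modules are locally free, $L_{\mathfrak p}$ has a free basis and part (1) supplies the local PBW basis. For (3), when $L$ is finitely generated projective, $L$ is locally free of rank at most $n$ (for $n$ a number of global generators), so $\uu(R,L)$ is locally free of rank at most $p^n$; by quasi-compactness of $\operatorname{Spec} R$ this implies global finite generation, and combined with local freeness gives finitely generated projectivity.
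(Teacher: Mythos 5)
Your proposal follows essentially the same route as the paper: both adapt Jacobson's argument by showing the elements $z_i=\iota_L(x_i)^p-\iota_L(x_i^{[p]})$ are central, establishing via the PBW filtration that the mixed monomials in the $z_i$ and the $\iota_L(x_i)$ form an $R$-basis of $\UU(R,L)$, passing to the quotient by the ideal the $z_i$ generate, and handling parts (2) and (3) by localization. The only cosmetic difference is that you identify the full relation ideal $J$ with the ideal generated by the $z_i$, whereas the paper verifies the universal property of the quotient directly using Lemma~\ref{thm:4}.
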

\begin{proof}
  For the first assertion we use Rinehart's PBW theorem for $\UU(R,L)$ which asserts that the ordered monomials in $\iota_L(x_i)$ form a basis of $\UU(R,L)$, and we adapt the technique from \cite[V.7]{MR559927} to pass to the ``small'' enveloping algebra.

  We note that $\UU=\UU(R,L)$ is filtered with respect to the total degree in the $\iota_L(x_i)$. By \cite{MR0154906} the ordered monomials in the $\iota_L(x_i)$ form a basis of the free left $R$-module $\UU$. In particular $\UU^{(n)}/\UU^{(n-1)}$ is free with basis the classes of those elements in the PBW basis of total degree $n$.

  The elements $z_i:=\iota_L(x_i)^p-\iota_L(x_i^{[p]})$ are central in $\UU(R,L)$ since \begin{multline*}\iota_L(x_i^{[p]})\iota_L(x_j)-\iota_L(x_j)\iota_L(x_i^{[p]})=\iota_L(\ad(x_i)^p(x_j))=\\=\ad(\iota_L(x_i))^p(\iota_L(x_j))=\iota_L(x_i)^p\iota_L(x_j)-\iota_L(x_j)\iota_L(x_i)^p
  \end{multline*}
  and $\iota_L(x_i^{[p]})\iota_R(r)=\iota_R(\epsilon(x_i^{[p]})(r))=\iota_R(\epsilon(x_i)^p(r))=\iota_R(r)\iota_L(x_i)^p$. We claim that the elements $z_{i_1}^{d_1}\cdot\dots\cdot z_{i_r}^{d_r}\iota_L(x_{i_1})^{e_1}\cdot\dots\cdot\iota_L(x_{i_r})^{e_r}$ with $r\geq 0$, $0\leq d_i$, $0\leq e_i<p$, and $d_i+e_i>0$, form another $R$-basis of $\UU$. For this it suffices to check that the classes of those monomials with $p\sum d_i+\sum e_i=n$ form an $R$-basis of $\UU^{(n)}/\UU^{(n-1)}$. But this is true since modulo $\UU^{(n-1)}$ these coincide with the elements of the PBW basis of $\UU^{(n))}$.

  Now (re)define $\uu(R,L)$ by factoring the ideal of $\UU(R,L)$ generated by positive powers of the $z_i$. By the above, $\uu(R,L)$ is clearly free with the claimed basis, but we need to verify that the maps $\iota_L$ and $\iota_R$ defined by prolonging those defined for $\UU(R,L)$ satisfy the properties in \cref{def:RUEA}. Except for the property that $\iota_L$ should be a map of restricted Lie algebras, these are already inherited from $\UU(R,L)$. In particular $\iota_L$ is a map of Lie-Rinehart algebras to $\uu(R,L)$, which is a restricted Lie-Rinehart algebra in the sense of \ref{ex:1}. Thus we are done by \ref{thm:4}.

  Now assume that $L$ is a projective left $R$-module. We need to show that each localization $R_{\mathfrak p}\ou R\uu(R,L)$ at a prime ideal $\mathfrak p$ of $R$ is a free $R_{\mathfrak p}$-module. Since $\uu(R,L)$ is clearly finitely generated if $L$ is, this finishes the proof.

  In \cite{MR0154906} it was already shown how to endow $R_{\mathfrak p}\ou RL$ with the structure of a Lie-Rinehart algebra over $R_{\mathfrak p}$. We will show in the next section how to extend the $p$-structure on $L$ to make $R_{\mathfrak p}\ou RL$ a restricted Lie-Rinehart algebra. Obviously $R_{\mathfrak p}\ou R\UU$ is the universal envelope of the latter, and free as an $R_{\mathfrak p}$-module, since $R_{\mathfrak p}\ou RL$ is free.
\end{proof}

\section{Restricted Lie-Rinehart algebras under localization}
\label{sec:restr-lie-rineh-2}

Let $(R,L)$ be a restricted Lie-Rinehart algebra in characteristic $p$, and $\mathfrak p\subset R$ a prime ideal. We will show how to endow $(R_{\mathfrak p},L_{\mathfrak p})$ with the structure of a restricted Lie-Rinehart algebra. In \cite{MR0154906} the structure of Lie-Rinehart algebra is already described. For simplicity of notations we put $\MS=R\setminus\mathfrak p$. Then for $r\in R$, $s,t\in \MS$ and $x,y\in L$ we have
$\epsilon(s\inv x)(t\inv r)=s\inv t\inv x(r)-s\inv t^{-2}x(t)r$ and $[s\inv x,t\inv y]=s\inv t\inv[x,y]+s\inv x(t\inv)y-t\inv y(s\inv)x$.

Now we will define the $p$-th power operation on $L_{\mathfrak p}$ by
\begin{equation}
  (s\inv x)^{[p]}=s^{-p}x^{[p]}-s^{-p-1}x^{p-1}(s)x=s^{-p-1}(sx^{[p]}-x^{p-1}(s)x)
\end{equation}
This is inevitable, as one sees easily by developing $(ss\inv x)^{[p]}$. The main issue is to see that the power operation is well-defined.

We first calculate in $\UU_{\mathfrak p}=\UU(R_{\mathfrak p},L_{\mathfrak p})$:
\begin{equation*}
  (ss\inv x)^p=s^p(s\inv x)^p+x^{p-1}(s)s\inv x,
\end{equation*}
and thus
\begin{equation*}
  (s\inv x)^p=s^{-p}x^p-s^{-p-1}x^{p-1}(s)x=s^{-p-1}(sx^p-x^{p-1}(s)x).
\end{equation*}
Assume that $t\inv y=s\inv x$, i.e. there exists $u\in\MS$ such that $utx=usy$ in $L$.
Then $(s\inv x)^p=(t\inv y)^p=t^{-p-1}(ty^p-y^{p-1}(t)y)$, which means that there is $v\in\MS$ with
\begin{equation}\label{eq:2}
  vt^{p+1}(sx^p-x^{p-1}(s)x)=vs^{p+1}(ty^p-y^{p-1}(t)y).
\end{equation}
On the other hand $(utx)^p=(usy)^p$, that is
\begin{equation*}
  u^pt^px^p+(utx)^{p-1}(ut)x=u^ps^py^p+(usy)^{p-1}(us)y.
\end{equation*}
Multiply this with $vts$ and subtract \eqref{eq:2} multiplied by $u^p$ to get
\begin{equation}
  \label{eq:3}
  vts(utx)^{p-1}(ut)x+vu^pt^{p+1}x^{p-1}(s)x=vts(usy)^{p-1}(us)y+vu^ps^{p+1}y^{p-1}(t)y.
\end{equation}
Next, we note that also $(utx)^{[p]}=(usy)^{[p]}$, that is
\begin{equation*}
  u^pt^px^{[p]}+(utx)^{p-1}(ut)x=u^ps^py^{[p]}+(usy)^{p-1}(us)y.  
\end{equation*}
Multiply this with $vts$ and subtract \eqref{eq:3} to obtain
\begin{equation*}
  vu^pt^{p+1}sx^{[p]}-vu^pt^{p+1}x^{p-1}(s)x=vu^pts^{p+1}y^{[p]}-vu^ps^{p+1}y^{p-1}(t)y,
\end{equation*}
or
\begin{equation*}
  vu^pt^{p+1}(sx^{[p]}-x^{p-1}(s)x)=vu^ps^{p+1}(ty^{[p]}-y^{p-1}(t)y), 
\end{equation*}
which implies
\begin{equation*}
  s^{-p-1}(sx^{[p]}-x^{p-1}(s)x)=t^{-p-1}(ty^{[p]}-y^{p-1}(t)y)
\end{equation*}
in $L_{\mathfrak p}$.

Next, we verify the $p$-power identity $(a+b)^{[p]}=a^{[p]}+b^{[p]}+\sum_{i=1}^{p-1}\lambda_i(a,b)$ with the usual expressions $\lambda_i(a,b)$ in the Lie algebra generated by $a,b\in L_{\mathfrak p}$. We can assume $a=s\inv x$ and $b=s\inv y$ for $x,y\in L$ and $s\in\MS$. Again, we pass through a calculation in $\UU_{\mathfrak p}$: We have
\begin{align*}
  (s\inv(x+y))^p&=(s\inv x)^p+(s\inv y)^p+\sum\lambda_i(s\inv x,s\inv y)\\
                &=s^{-p-1}(sx^p-x^{p-1}(s)x+sy^p-y^{p-1}(s)y)+\sum\lambda_i(s\inv x,s\inv y)\\
  \intertext{and on the other hand}
  (s\inv(x+y))^p&=s^{-p-1}(s(x+y)^p-(x+y)^{p-1}(s)(x+y))\\
                &=s^{-p-1}(sx^p+sy^p+s\sum\lambda_i(x,y)-(x+y)^{p-1}(s)(x+y))\\
  \intertext{so that combining the two}
  (s\inv(x+y))^{[p]}&=s^{-p-1}(s(x+y)^{[p]}-(x+y)^{p-1}(s)(x+y))\\
                &=s^{-p-1}(sx^{[p]}+sy^{[p]}+s\sum\lambda_i(x,y)-(x+y)^{p-1}(s)(x+y))\\
                &=s^{-p-1}(sx^{[p]}-x^{p-1}(s)x+sy^{[p]}-y^{p-1}(s)y)+\sum\lambda_i(s\inv x,s\inv y)\\
  &=(s\inv x)^{[p]}+(s\inv y)^{[p]}+\sum\lambda_i(s\inv x,s\inv y)
\end{align*}
as desired. We skip the proof that $[a^{[p]},b]=\ad(a)^p(b)$ carries over to the localization as well.

\bibliographystyle{plain}
\bibliography{eigene,andere,arxiv,mathscinet}
\end{document}